\newcommand{\diam}{\text{{\rm diam}}}
\newcommand{\R}{{\mathbb R}} 
\newcommand{\Z}{{\mathbb Z}}
\newcommand{\N}{{\mathbb N}}
\renewcommand{\(}{\left(}
\renewcommand{\)}{\right)}
\def \e {\varepsilon}
\newtheorem{theorem}{Theorem}
\newtheorem{corollary}[theorem]{Corollary}
\newtheorem{lemma}[theorem]{Lemma}
\newtheorem{remark}[theorem]{Remark}
\begin{document}











\title[$h$-Monotone maps ~~~\today]{Fine properties of monotone maps arising in optimal transport for non quadratic costs}
\author[C. E. Guti\'errez and A. Montanari, \today]{Cristian E. Guti\'errez and Annamaria Montanari
\\
\today}
\thanks{
The first named author wishes to warmly thank Annamaria Montanari for the invitation to visit the University of Bologna during June/July 2022 where this research was partially carried out. He would also like to thank this institution and the Indam from the hospitality and support.
C.E.G. was partially supported by NSF grant DMS--1600578, and A. M. was partially supported by a grant from GNAMPA}
\address{Department of Mathematics\\Temple University\\Philadelphia, PA 19122}
\email{cristian.gutierrez@temple.edu}
\address{Dipartimento di Matematica\\Piazza di Porta San Donato 5\\Universit\`a di Bologna\\40126 Bologna, Italy}
\email{annamaria.montanari@unibo.it}
\dedicatory{To our dear friend and collaborator Ermanno Lanconelli on his 80th birthday\\
celebrating your remarkable journey and contributions to mathematics.\\
With warmest regards and deepest appreciation.\\
}

\maketitle

\begin{abstract}

The cost functions considered are $c(x,y)=h(x-y)$, where $h\in C^2(\mathbb{R}^n)$, homogeneous of degree $p\geq 2$, with a positive definite Hessian in the unit sphere. We study multivalued monotone maps with respect to that cost and establish that they are single-valued almost everywhere.
Further consequences are then deduced.

\end{abstract}
\tableofcontents
\setstcolor{blue}

\setcounter{equation}{0}
\section{Introduction}

In this paper, we examine monotone maps relative to cost functions that aren't necessarily quadratic, but arise in optimal transport. Our aim is to delve into this concept to establish fine properties of these maps, particularly their almost everywhere single-valued nature.

This investigation is crucial for proving $L^\infty$-estimates for these maps, as they can be derived through the integration of the inequality \eqref{eq:map T is c-monotone h}, a method utilized in \cite{Gutierrez-Montanari:Linfty-estimates}. Furthermore, it leads to differentiability properties of the maps, which we will address elsewhere.

This work complements our paper \cite{Gutierrez-Montanari:Linfty-estimates}, which stems from the foundational work by Goldman and Otto \cite{2020-goldman-otto-variational}, who developed a variational approach to establish regularity of optimal maps for quadratic costs.

The cost functions considered have the form $c(x,y)=h(x-y)$ where $h\in C^2(\R^n)$, is nonnegative, positively homogeneous of degree $p$ for some $p\geq 2$, and satisfies \eqref{eq:strict ellipticity of D2h}.

We say that a multivalued map $T:\R^n\to \mathcal P(\R^n)$ is $c$-monotone (or $h$-monotone) if
\begin{equation}\label{eq:map T is c-monotone h}
c\(x,\xi\)+ c\(y,\zeta\)\leq c\(x,\zeta\)+ c\(y,\xi\),
\end{equation}
for all $x,y\in \text{dom}(T)$ and for all $\xi\in T(x)$ and $\zeta\in T(y)$; where 
$\text{dom}(T)=\{x\in \R^n:T(x)\neq \emptyset\}$.
For the quadratic cost $h(x)=|x|^2$, this corresponds to the well-known notion of monotone map, extensively studied in various contexts as described in the classic book by H. Br\'ezis \cite{brezis-book-monotone-maps}.

In the framework of optimal transport theory, if $c(x,y):D\times D^*\to [0,+\infty)$ is a general cost function, then 
the optimal map for the Monge problem is given by $T=\mathcal N_{c,\phi}$ where $\phi$ is $c$-concave and 
\[
\mathcal N_{c,\phi}(x)
=
\left\{m\in D^*:\phi(x)+\phi^c(m)=c(x,m)\right\}
\]
with $\phi^c(m)=\inf_{x\in D}\(c(x,m)-\phi(x)\)$, see for example \cite[Chapter 6]{Gutierrez:23}.
It is then clear that the optimal map $\mathcal N_{c,\phi}$ is $c$-monotone, and further it is $c$-cyclically monotone.  That is, a map $T:D\to \mathcal P(D^*)$ is $c$-cyclically monotone if for all $N\in \N$ and $\{(x_i,\xi_i)\}_{i=1}^N$ with $\xi_i\in T(x_i)$, and for any permutation $\sigma$ of the indices $1,\cdots ,N$, we have
\begin{equation}
 \sum_{i=1}^Nc\(x_i, \xi_i\)\leq \sum_{i=1}^Nc\(x_i, \xi_{\sigma(i)}\).
 \end{equation}
These three notions—monotonicity, cyclic monotonicity, and optimality—are interrelated, and the broadest class of maps consists of those that are just monotone. An examination of these notions with respect to a cost is presented in the recent paper \cite{2024-de-pascale-survey-monotonicity}, which also includes several examples and extensions.

It's important to note that we will only consider maps $T$ satisfying \eqref{eq:map T is c-monotone h}, which are not necessarily optimal or cyclically monotone.


The organization of the paper is outlined as follows:

Section \ref{sec:preliminaries} presents equivalent formulations of $h$-monotonicity, akin to the standard monotone map concept, which are required for later discussions.

Our main result, Theorem \ref{thm:monotoneHntoR2n+1}, asserts that every $h$-monotone map $T$ is single-valued almost everywhere, and hence ensuring the validity of the inequality \eqref{eq:map T is c-monotone h} for almost all points $(x,Tx)$, $x\in \text{dom}(T)$. This is critical for establishing $L^\infty$-estimates for $h$-monotone maps, as elucidated in \cite{Gutierrez-Montanari:Linfty-estimates}. Furthermore, Theorem \ref{thm:monotoneHntoR2n+1} also implies that $T$ induces a push-forward measure, as illustrated in Corollary \ref{cor:aleksandrovlemma}. Moreover, if $T$ is maximal, it induces an Aleksandrov-type measure, as indicated in Theorem \ref{thm:monge ampere measure for h monotone maximal}.

Section \ref{sec:rectifiability} demonstrates that if $T$ satisfies \eqref{eq:map T is c-monotone h}, then the set $S=\{(x,\xi):\xi\in Tx, x\in \text{dom}(T)\}$ is rectifiable.

Finally, the Appendix (Section \ref{sec:appendix}) contains a result utilized in the proof of Theorem \ref{thm:monotoneHntoR2n+1}.

\section{Preliminaries on $h$-Monotone maps}\label{sec:preliminaries}
\setcounter{equation}{0}

In this section we present equivalent formulations of the Definition \eqref{eq:map T is c-monotone h} of $h$-monotonicity and integral representation formulas that will be needed to prove our results.
From \eqref{eq:map T is c-monotone h} and since $h\in C^2$ we have
\begin{align*}
0&\leq  h(y-\xi)-h(y-\zeta)-\(h(x-\xi)-h(x-\zeta)\)\notag\\
&=\int_0^1 \langle Dh(y-\zeta+s(\zeta-\xi)), \zeta-\xi\rangle ds -\int_0^1 \langle Dh(x-\zeta+s(\zeta-\xi)), \zeta-\xi\rangle ds\notag\\
&=\int_0^1 \langle Dh(y-\zeta+s(\zeta-\xi))-Dh((x-\zeta+s(\zeta-\xi)), \zeta-\xi\rangle ds\notag\\
&= \int_0^1\int_0^1\langle D^2h(y-\zeta+s(\zeta-\xi)+t (x-y)) (x-y),  (\xi-\zeta)\rangle dt\, ds\notag\\
&=  \langle A(x,y;\xi,\zeta) (x-y),  \xi-\zeta\rangle,\qquad \forall x,y\in \text{dom}(T), \xi\in T(x),\zeta\in T(y).
\end{align*}

Therefore \eqref{eq:map T is c-monotone h} is equivalent to 
\begin{equation}\label{eq:A}
 \langle A(x,y;\xi,\zeta) (x-y),  \xi-\zeta\rangle\geq 0,\qquad \forall x,y\in \text{dom}(T), \xi\in T(x),\zeta\in T(y)
\end{equation}
with
\begin{equation}\label{eq:definition of A(x,y)}
A(x,y;\xi,\zeta)=\int_0^1\int_0^1 D^2h(y-\zeta+s(\zeta-\xi)+t (x-y))dt\, ds.
\end{equation}
The matrix $A(x,y;\xi,\zeta)$ is clearly symmetric, and satisfies $A(x,y;\xi,\zeta)=A(y,x;\zeta,\xi)$ by making the change of variables $t=1-t',s=1-s'$ in the integral.
If $h$ is homogenous of degree $p$ with $p\geq 2$, then $D^2h(z)$ is positively homogeneous of degree $p-2$.
We assume that $D^2h(x)$ is positive definite for each $x\in S^{n-1}$ and since $h\in C^2$, then there are positive constants $\lambda,\Lambda$ such that 
\begin{equation}\label{eq:strict ellipticity of D2h}
\lambda \,|v|^2
\leq 
\left\langle D^2h(x)v,v\right\rangle
\leq
\Lambda \,|v|^2,\qquad \forall x\in S^{n-1},v\in \R^n.
\end{equation}
We then have
\[
A(x,y;\xi,\zeta)=
\int_0^1 \int_0^1|y-\zeta+s(\zeta-\xi)+t (x-y)|^{p-2}D^2h\left(\frac{y-\zeta+s(\zeta-\xi)+t (x-y)}{ |y-\zeta+s(\zeta-\xi)+t (x-y)|}\right)dt \, ds
\]
and
\begin{equation}\label{eq:ellipticity of A}
\lambda \,\Phi(x,y;\xi,\zeta)\,|v|^2
\leq
\left\langle  A(x,y;\xi,\zeta)\,v,v\right\rangle\leq \Lambda \, \Phi(x,y;\xi,\zeta)\,|v|^2\quad \forall v \in \R^n,
\end{equation}
with 
\begin{equation}\label{eq:Phi}
 \Phi(x,y;\xi,\zeta)=\int_0^1\int_0^1|y-\zeta+s(\zeta-\xi)+t (x-y)|^{p-2}dt \, ds.
\end{equation}
We also have that $\Phi(x,y;\xi,\zeta)=0$ if and only if $y-\zeta+s(\zeta-\xi)+t (x-y)=0$ for all $s,t\in [0,1]$. That is, $\Phi(x,y;\xi,\zeta)=0$ if and only if $y-\zeta=0$, $\zeta-\xi=0$ and $x-y=0$.
Therefore $\Phi(x,y;\xi,\zeta)>0$ if and only if $\zeta\neq y$ or $\zeta\neq \xi$ or $x\neq y$.

\section{$h$-monotone maps are single valued a.e.}
\setcounter{equation}{0}

The main result of this section is the following.

\begin{theorem}\label{thm:monotoneHntoR2n+1}
If $T:\R^n\to \mathcal P(\R^n)$ is a multivalued map that is $h$-monotone, with $h$ satisfying the assumptions in the previous section, then $T(x)$ is a singleton for all points $x\in \text{dom}(T)$ except on a set of measure zero.
\end{theorem}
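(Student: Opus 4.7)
The plan is to reduce the problem locally to an approximate classical monotonicity via a linearization of $D^2h$, apply a Minty rotation to realize each local piece of the graph as a Lipschitz submanifold, and then invoke the lemma in the appendix to extract single-valuedness a.e. For the localization: since failure of single-valuedness on a countable union of null sets remains null, the Lindel\"of property applied to the graph $S=\{(x,\xi):\xi\in T(x)\}$ reduces the task to proving the conclusion on each of countably many pieces, each contained in a product of small balls $B_\delta(x_0)\times B_\delta(\xi_0)$, chosen with $x_0\neq \xi_0$ when $p>2$ so that $D^2h$ stays uniformly nondegenerate along the integrand of \eqref{eq:definition of A(x,y)}; for $p=2$ no such restriction is needed since $D^2h$ is then a constant positive definite matrix.

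On the piece, continuity of $D^2h$ gives $\|A(x,y;\xi,\zeta)-M_0\|_{\mathrm{op}}\le \omega(\delta)\to 0$ where $M_0:=D^2h(x_0-\xi_0)$. Writing $M_0=B_0^2$ for its symmetric positive square root $B_0$ and performing the linear change of variables $\tilde x=B_0x$, $\tilde\xi=B_0\xi$, the monotonicity \eqref{eq:A} becomes, for the transformed multifunction $\tilde T(\tilde x):=B_0T(B_0^{-1}\tilde x)$, the approximate classical monotonicity
\[
\langle \tilde x-\tilde y,\tilde\xi-\tilde\zeta\rangle \geq -\omega_1(\delta)\,|\tilde x-\tilde y|\,|\tilde\xi-\tilde\zeta|,\qquad \omega_1(\delta)\to 0.
\]
Applying the Minty rotation $u=\tilde x+\tilde\xi$, $v=\tilde x-\tilde\xi$, and using the identity $|u-u'|^2-|v-v'|^2=4\langle\tilde x-\tilde y,\tilde\xi-\tilde\zeta\rangle$ together with $|\tilde x-\tilde y|\,|\tilde\xi-\tilde\zeta|\le \tfrac14(|u-u'|^2+|v-v'|^2)$, the previous inequality yields
\[
|v-v'|^2 \le \frac{1+\omega_1(\delta)}{1-\omega_1(\delta)}\,|u-u'|^2,
\]
so that for $\delta$ small enough the image of the piece in $(u,v)$-coordinates lies inside the graph $v=F(u)$ of a Lipschitz map $F$, i.e.\ an $n$-dimensional Lipschitz submanifold of $\R^{2n}$.

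To conclude single-valuedness of $T$ almost everywhere in $x$, I would invoke the technical lemma of the Appendix (Section~\ref{sec:appendix}) to pass from this Lipschitz graph structure back to the $x$-projection, and then combine the countably many piece-wise conclusions into the global statement. The main obstacle lies precisely at this last step: the projection $u\mapsto \tilde x(u)=\tfrac12(u+F(u))$ can be singular on the set where $-1$ lies in the spectrum of $DF$, and in addition the piece-wise localization does not a priori preclude a point $x$ from having values of $T(x)$ scattered across different pieces. Controlling the Lebesgue measure of the image of the critical set (via a Sard/area-formula-type argument for Lipschitz maps) and handling the interaction of different pieces are precisely where the appendix lemma, together with the fact that $\omega_1(\delta)$ can be chosen arbitrarily small, is indispensable.
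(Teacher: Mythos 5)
The proposal does not prove the theorem, and the paper itself identifies the precise obstruction. Your Minty-rotation argument shows that locally, after freezing $A$ near a matrix $M_0$, the graph $\{(x,\xi):\xi\in Tx\}$ intersected with a small product neighborhood is (up to a linear change of variables) the graph of a Lipschitz map $F$, i.e.\ it is locally an $n$-dimensional Lipschitz submanifold of $\R^{2n}$. This is essentially the rectifiability argument the paper carries out separately in Section~\ref{sec:rectifiability}, and the paper explicitly warns that it does \emph{not} imply a.e.\ single-valuedness: it gives the example $Tx=\{f(x)+k:k\in\Z\}$, whose graph is a countable union of smooth graphs, yet $T$ is multivalued at every point. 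The issue is exactly the one you flagged but did not resolve: rectifiability of each small piece says nothing about whether two different pieces can project onto the same $x$-set of positive measure, nor about the fibers of the singular projection $u\mapsto \tfrac12(u+F(u))$ when $DF$ has eigenvalue $-1$. The appendix lemma (Lemma~\ref{lm:densitypointslebesgue}) is purely a statement about density points of arbitrary subsets of $\R^n$; it has no mechanism to control either of those phenomena, and the smallness of $\omega_1(\delta)$ only improves the Lipschitz constant toward $1$, which is not enough (a $1$-Lipschitz $F$ already admits non-injective projection).

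The structural reason your argument cannot be repaired by more localization is that multivaluedness of $T$ at $x_0$ is a \emph{non-local} event in the $\xi$ variable: it means $T(x_0)$ contains two points $y_1,y_2$ that are far apart (distance $\ge 1/2k$). Your decomposition places $(x_0,y_1)$ and $(x_0,y_2)$ in two disjoint pieces $B_\delta(x_0)\times B_\delta(y_1)$ and $B_\delta(x_0)\times B_\delta(y_2)$, and then only uses the monotonicity \eqref{eq:A} \emph{within} each piece. The information that kills multivaluedness is the monotonicity between a point $(x,\xi)$ with $\xi$ near $y_1$ and the fixed far-away pair $(x_0,y_2)$; you never invoke it. The paper's proof is built precisely on this cross-piece inequality: picking $y_1\in T(x_0)\cap B_j$, $y_2\in T(x_0)$ with $|y_1-y_2|\ge 1/2k$, and $e=y_2-y_1$, it tests monotonicity of the pair $((x,\xi),(x_0,y_2))$ and, after the angle estimates in Steps~1--2, deduces that $T(x)\cap B_j=\emptyset$ for every $x$ in a fixed cone $\Gamma$ around direction $e$ from $x_0$. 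Hence $S_{kj}$ avoids $\Gamma$ near $x_0$, so $x_0$ is not a density point of $S_{kj}$, and Lemma~\ref{lm:densitypointslebesgue} finishes it. Note that this is also where the appendix lemma is genuinely used: as a density-point statement for not-necessarily-measurable sets, not as a rectifiability-to-projection tool. If you want to continue along your line, you would need to introduce the far-apart pair $y_1,y_2\in T(x_0)$, keep one of them outside the localization window, and extract a cone condition from \eqref{eq:A} applied across the two scales; that is essentially the paper's Step~1--2 and is where the real work of the theorem lies.
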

\begin{proof}
Let 
\[
S=\{x\in \text{dom}(T): T(x) \text{ is not a singleton}\},
\]
we will prove that $S$ has Lebesgue measure zero.
For each $k\geq 1$ integer, let
\[
S_k=\{x\in \text{dom}(T): \diam(T(x)) > 1/k\}.
\]
We have $S=\cup_{k=1}^\infty S_k$.
Also for each $k\geq 1$ integer, let $\{B_j\}_{j=1}^\infty$ be a family of Euclidean balls in $\R^n$ each one with radius $\epsilon/k$ with $\epsilon>0$ small to be chosen later after inequality \eqref{eq:lower bound for Ftheta}, depending only $\Lambda/\lambda$, the constants in \eqref{eq:strict ellipticity of D2h}, and such that $\R^n=\cup_{j=1}^\infty B_j$.
Let
\[
B_j^*=\{x\in \text{dom}(T):T(x)\cap B_j\neq \emptyset\},
\text{ and }
S_{kj}=S_k\cap B_j^*.
\]
We have $\cup_{j=1}^\infty B_j^*= \text{dom}(T)$ and so $S=\cup_{k,j=1}^\infty S_{kj}$.
We shall prove that $|S_{kj}|=0$ for all $k$ and $j$.
In order to do this we recall that a point $x\in \R^n$ is a density point for the set $E\subset \R^n$, with $E$ not necessarily Lebesgue measurable, if 
\[
\limsup_{r\to 0}\dfrac{|E\cap B_r(x)|_*}{|B_r(x)|}=1
\]
where $|\cdot |_*$ denotes the Lebesgue outer measure and $B_r(x)$ is the Euclidean ball with center $x$ and radius $r$.
We shall prove in Lemma \ref{lm:densitypointslebesgue}, that if $E\subset \R^n$ is any set, then almost all points in $E$ are density points for $E$.
In view of this, if we prove that each $x_0\in S_{kj}$ is not a density point for $S_{kj}$, then it follows that $|S_{kj}|=0$\footnote{A priori we do not know if the set $S_{kj}$ is Lebesgue measurable and therefore we cannot apply the Lebesgue differentiation theorem directly.}.

Let us fix $x_0\in S_{kj}$. Then $\diam T(x_0)> 1/k$ and $T(x_0)\cap B_j\neq \emptyset$, so we can pick $y_1\in T(x_0)\cap B_j$. There exists $y_2\in T(x_0)$ such that $|y_1-y_2|\geq 1/2k$,
because otherwise $T(x_0)\subset B_{1/2k}(y_1)$ which would imply that $\diam T(x_0)\leq 1/k$. 
Also notice that $y_2\notin B_j$ if $\epsilon<1/4$, since $B_j$ has radius $\epsilon/k$.
In addition, if $x_j$ is the center of $B_j$, it follows that $|x_j-y_2|\geq |y_2-y_1|-|y_1-x_j|\geq \dfrac{1}{2k}-\dfrac{\epsilon}{k}(>0)$ if $\epsilon<1/2$. 
And so for each $\xi\in B_j$, $|\xi-y_2|\geq |x_j-y_2|-|\xi-x_j|\geq \dfrac{1}{2k}-\dfrac{2\epsilon}{k}\geq \dfrac{2\epsilon}{k}$ if $\epsilon<1/8$.

Set
\[
y_2-y_1=e.
\]
Given $x\neq x_0$, $x\in \text{dom}(T)$, let $z=x-x_0-\(\(x-x_0\)\cdot \dfrac{e}{|e|}\)\,\dfrac{e}{|e|}$.
Then $z\cdot \dfrac{e}{|e|}=0$, $\(x-x_0\)\cdot \dfrac{z}{|z|}=|z|$, and
\[
x-x_0=\(\(x-x_0\)\cdot \dfrac{e}{|e|}\)\,\dfrac{e}{|e|}+\(\(x-x_0\)\cdot \dfrac{z}{|z|}\)\,\dfrac{z}{|z|}.
\]
If $\delta$ is the angle between the unit vectors $\dfrac{x-x_0}{|x-x_0|}$ and $\dfrac{e}{|e|}$, then we have $0<\delta<\pi$ and 
\begin{equation}\label{eq:writing of vector x-x_0}
\dfrac{x-x_0}{|x-x_0|}
=\cos \delta\,\dfrac{e}{|e|}+\sin \delta\,\dfrac{z}{|z|}.
\end{equation}
Let $\xi \in T(x)$ and consider the matrix $A(x,x_0;\xi,y_2)$ defined by \eqref{eq:definition of A(x,y)}. 
From \eqref{eq:ellipticity of A},
\begin{equation}\label{eq:ellipticity bis}
\lambda\,\Phi(x,x_0;\xi,y_2) \,Id\leq A(x,x_0;\xi,y_2)\leq \Lambda\,\Phi(x,x_0;\xi,y_2) \,Id
\end{equation}
and since $x\neq x_0$, $\Phi(x,x_0;\xi,y_2)>0$. To simplify the notation we write $A(x,x_0;\xi,y_2)=A$ and $\Phi(x,x_0;\xi,y_2)=\Phi$.

To show that $x_0$ is not a density point for $S_{kj}$ we analyze the sizes of the angles between various vectors using the $h$-monotonicity.  For the sake of clarity we divide the proof into three steps.

{\bf Step 1.}  We shall estimate
$
F(\delta):=\text{angle}\(A^{1/2}\dfrac{x-x_0}{|x-x_0|}, A^{1/2}\dfrac{e}{|e|}\),
$
proving \eqref{eq:estimate for F(delta)}.

We have 
$
\cos F(\delta)=\dfrac{\left\langle 
A^{1/2}\dfrac{x-x_0}{|x-x_0|}, A^{1/2}\dfrac{e}{|e|}\right\rangle}{\left|  A^{1/2}\dfrac{x-x_0}{|x-x_0|} \right|\,\left|  A^{1/2}\dfrac{e}{|e|} \right|},
$
and from \eqref{eq:writing of vector x-x_0}
\begin{align*}
\left\langle 
A^{1/2}\dfrac{x-x_0}{|x-x_0|}, A^{1/2}\dfrac{e}{|e|}\right\rangle
&=
\cos \delta \,\left| A^{1/2}\dfrac{e}{|e|}\right|^2
+
\sin \delta \,
\left\langle 
A^{1/2}
\dfrac{z}{|z|}, A^{1/2}\dfrac{e}{|e|}\right\rangle,\\
\left\langle 
A^{1/2}\dfrac{x-x_0}{|x-x_0|}, A^{1/2} \dfrac{x-x_0}{|x-x_0|}\right\rangle
&=
\cos^2 \delta \,\left| A^{1/2}\dfrac{e}{|e|}\right|^2
+
2\,\sin \delta \,\cos \delta\,
\left\langle 
A^{1/2}
\dfrac{z}{|z|}, A^{1/2}\dfrac{e}{|e|}\right\rangle
+
\sin^2 \delta \,\left| A^{1/2}\dfrac{z}{|z|}\right|^2.
\end{align*}
Hence
\begin{align}\label{eq:split formula for cos F}
\cos F(\delta)
&=
\begin{cases}
\dfrac{1+\tan \delta \,
\dfrac{\left\langle 
A^{1/2}
\dfrac{z}{|z|}, A^{1/2}\dfrac{e}{|e|}\right\rangle}{\left| A^{1/2}\dfrac{e}{|e|}\right|^2}}{\sqrt{1+\tan^2 \delta\,
\dfrac{\left| A^{1/2}\dfrac{z}{|z|}\right|^2}{\left| A^{1/2}\dfrac{e}{|e|}\right|^2}
+
2\,\tan \delta\,\dfrac{\left\langle 
A^{1/2}
\dfrac{z}{|z|}, A^{1/2}\dfrac{e}{|e|}\right\rangle}{\left| A^{1/2}\dfrac{e}{|e|}\right|^2}}} & \text{for $0<\delta< \pi/2$}\\
-\dfrac{1+\tan \delta \,
\dfrac{\left\langle 
A^{1/2}
\dfrac{z}{|z|}, A^{1/2}\dfrac{e}{|e|}\right\rangle}{\left| A^{1/2}\dfrac{e}{|e|}\right|^2}}{\sqrt{1+\tan^2 \delta\,
\dfrac{\left| A^{1/2}\dfrac{z}{|z|}\right|^2}{\left| A^{1/2}\dfrac{e}{|e|}\right|^2}
+
2\,\tan \delta\,\dfrac{\left\langle 
A^{1/2}
\dfrac{z}{|z|}, A^{1/2}\dfrac{e}{|e|}\right\rangle}{\left| A^{1/2}\dfrac{e}{|e|}\right|^2}}} & \text{for $\pi/2<\delta< \pi$.}\end{cases}
\end{align}
From \eqref{eq:ellipticity bis}
\[
\dfrac{\lambda}{\Lambda}=\dfrac{\lambda\,\Phi}{\Lambda\,\Phi}\leq C:=\dfrac{\left| A^{1/2}\dfrac{z}{|z|}\right|^2}{\left| A^{1/2}\dfrac{e}{|e|}\right|^2}
\leq
\dfrac{\Lambda\,\Phi}{\lambda\,\Phi}=\dfrac{\Lambda}{\lambda}.
\]
If 
\[
B:=\dfrac{\left\langle 
A^{1/2}
\dfrac{z}{|z|}, A^{1/2}\dfrac{e}{|e|}\right\rangle}{\left| A^{1/2}\dfrac{e}{|e|}\right|^2},
\]
then by Cauchy-Schwarz
\[
|B|\leq 
\dfrac{\left| A^{1/2}\dfrac{z}{|z|}\right|}{\left| A^{1/2}\dfrac{e}{|e|}\right|}
=
\sqrt{C}
\leq
\sqrt{\dfrac{\Lambda}{\lambda}}.
\]
Setting 
\begin{equation}\label{eq:definition of g}
g(s)=
\begin{cases}
\dfrac{1+B\,s }{\sqrt{1+C\,s^2 +2\,B\,s}} & \text{if $s\in(0,\infty)$}\\
\\
-\dfrac{1+B\,s }{\sqrt{1+C\,s^2 +2\,B\,s}} & \text{if $s\in (-\infty,0)$}
\end{cases}
\end{equation}
we get from \eqref{eq:split formula for cos F} that 
\[
\cos F(\delta)=g(\tan \delta).
\]
Notice that since $|B|\leq \sqrt{C}$ we have $1+C\,s^2 +2\,B\,s\geq 0$ for all $s\in \R$.
If $|B|< \sqrt{C}$, then $1+C\,s^2 +2\,B\,s> 0$ for all $s\in \R$; and if $|B|= \sqrt{C}$, then 
$1+C\,s^2 +2\,B\,s=1+B^2\,s^2 +2\,B\,s=(1+B\,s)^2$ which is strictly positive except when $s=-1/B$.
For $s>0$, let us now estimate 
\begin{equation*}
\Delta(s)=1-g(s)
=
\dfrac{\(C-B^2\)\,s^2}{\(1+B\,s+\sqrt{1+C\,s^2 +2\,B\,s}\)\sqrt{1+C\,s^2 +2\,B\,s}}.
\end{equation*}
Since $-\sqrt{C}\leq B\leq \sqrt{C}$, it follows that 
\[
1+C\,s^2 +2\,B\,s\geq \(1-\sqrt{C}\,|s|\)^2
\]
for $-\infty<s<\infty$. Therefore, if $|s|\leq 1/\(2\sqrt{C}\)$, then we get $1-\sqrt{C}\,|s|\geq 1/2$.
Also $1+B\,s\geq 1-\sqrt{C}\,s$ if $s>0$, and $1+B\,s\geq 1+\sqrt{C}\,s$ for $s<0$. So
$1+B\,s\geq 1-\sqrt{C}\,|s|$. 
Hence 
\[
0\leq \Delta(s)\leq 2\,\(C-B^2\)\,s^2\qquad \text{for $0<s\leq 1/\(2\sqrt{C}\)$}.
\]
In particular, 
since $\lambda/\Lambda\leq C\leq \Lambda/\lambda$ and $|B|\leq \sqrt{C}$, 
we obtain the bound
\[
0\leq \Delta(s)\leq 2\,\dfrac{\Lambda}{\lambda}\,s^2\qquad \text{for $0<s\leq  (1/2)\sqrt{\lambda/\Lambda}$}.
\]
This implies that 
\[
1-\cos F(\delta)\leq 4\,\dfrac{\Lambda}{\lambda}\,\tan^2 \delta\]
for $\delta$ such that $0<\tan \delta \leq (1/2)\sqrt{\lambda/\Lambda}$. Since $\tan \delta\sim \delta$ for $\delta$ sufficiently small, we then obtain the estimate
\[
1-\cos F(\delta)\leq 8\,\dfrac{\Lambda}{\lambda}\,\delta^2\]
for all $0<\delta\leq \delta_0$, with $\delta_0$ a positive number depending only on $\lambda/\Lambda$. Hence
\[
F(\delta)\leq \arccos \(1-8\,\dfrac{\Lambda}{\lambda}\,\delta^2\)\qquad \text{for $0<\delta\leq \delta_0$.}\]
On the other hand, $\dfrac{\arccos (1-x^2)}{x}\to \sqrt2$ as $x\to 0^+$, it follows that
\begin{equation}\label{eq:estimate for F(delta)}
F(\delta)\leq C(\Lambda/\lambda)\,\delta \qquad \text{for $0<\delta\leq \delta_0$.}
\end{equation}

Therefore, from the definitions of $\delta$ and $F(\delta)$ we obtain
\begin{equation}\label{eq:estimate for F(delta) rewritten}
\text{angle}\(A(x,x_0;\xi,y_2)^{1/2}(x-x_0),A(x,x_0;\xi,y_2)^{1/2}e\)
\leq
C(\Lambda/\lambda)\,\text{angle} (x-x_0,e),
\end{equation}
for all $\xi\in T(x)$ if $\text{angle} (x-x_0,e)\leq \delta_0$, $x\neq x_0$, $x\in \text{dom}(T)$.
On the other hand, from the monotonicity \eqref{eq:A}
\[
\langle A(x,x_0;\xi,y_2) (x-x_0),  \xi-y_2\rangle\geq 0,\qquad \forall \xi\in T(x)
\]
that is, $\langle A(x,x_0;\xi,y_2)^{1/2} (x-x_0),  A(x,x_0;\xi,y_2)^{1/2}(\xi-y_2)\rangle\geq 0$ for all $\xi\in T(x)$ and therefore
\[
\text{angle}\(A(x,x_0;\xi,y_2)^{1/2} (x-x_0),  A(x,x_0;\xi,y_2)^{1/2}(\xi-y_2)\)\leq \pi/2,\,\forall \xi\in T(x).
\]
Hence from \eqref{eq:estimate for F(delta) rewritten}
\begin{align}\label{eq:upper estimate of angle}
&\text{angle}\(A(x,x_0;\xi,y_2)^{1/2} (\xi-y_2),  A(x,x_0;\xi,y_2)^{1/2}e\)\notag\\
&\qquad \leq \pi/2+C(\Lambda/\lambda)\,\text{angle} (x-x_0,e),\,\forall \xi\in T(x),
\end{align} 
when $\text{angle} (x-x_0,e)\leq \delta_0$, $x\neq x_0$, $x\in \text{dom}(T)$.

{\bf Step 2.}
Let $\Gamma=\{x:\text{angle} (x-x_0,e)\leq \delta_0\}$.
We shall prove that 
\begin{equation}\label{eq:intersection empty}
T(\Gamma)\cap B_j=\emptyset,
\end{equation}
for $\epsilon$ sufficiently small depending only on $\Lambda/\lambda$; $B_j=B_{\epsilon/k}(x_j)$.

Recall that $x_0\in S_{kj}$, $y_1\in Tx_0\cap B_j$, and $y_2\in Tx_0$ with $|y_2-y_1|\geq 1/2k$.
Let $\mathcal C$ be the cone with vertex $y_2$, axis $e=y_2-y_1$, and opening $\pi+\theta_0$.
We have $\mathcal C\cap B_j=\emptyset$ when $\theta_0>0$ is small, for all $\epsilon<1$.
Suppose by contradiction that \eqref{eq:intersection empty} does not hold, that is, there is $\xi \in T(x)\cap B_j$ for some $x\in \Gamma$, and  
let $\theta$ be the angle between $\xi-y_2$ and $e$.
Since $\xi\in B_j$, we then have $\theta\geq \(\pi+\theta_0\)/2$ for all $\epsilon<1$.
To obtain a contradiction, we first right down the left hand side of \eqref{eq:upper estimate of angle} in terms of the angle $\theta$ and will show that it is close to $\pi$ when $\theta\sim \pi$, for $\epsilon$ sufficiently small, contradicting \eqref{eq:upper estimate of angle}.
\vskip 0.3in
\begin{figure}[h]
\begin{tikzpicture}
\draw (0,0) circle (3);
\draw (0,0) circle (1.5);
\draw[<-] (-.5,.6) -- (10,0);
\draw[<-] (-0.1,-.9) -- (10,0);
\draw (10,0) -- (.225,1.483);
\draw (10,0) -- (.225,-1.483);
\draw[dotted] (0,0) -- (.225,1.483);
\draw[dotted] (0,0) -- (.225,-1.483);
\draw[dotted] (0,0) -- (-3,0);
\draw (.5,1) node {$\epsilon/k$};
\draw (-2,-.3) node {$2\epsilon/k$};
\draw (6.7,0) node {$\alpha$};
\draw (5.2,0) node {$2\beta$};
\draw [red,thick,domain=177:185] plot ({10+ 3*cos(\x)}, {3*sin(\x)});
\draw [blue,thick,domain=171.5:188.5] plot ({10+ 4.5*cos(\x)}, {4.5*sin(\x)});
\filldraw (10,0) circle (1pt) node[align=right,   below] {$y_2$};
\filldraw (-0.5,.6) circle (1pt) node[align=right,   below] {$y_1$};
\filldraw (-0.1,-.9) circle (1pt) node[align=right,   below] {$\xi$};
\filldraw (0,0) circle (1pt) node[align=left,   below] {$x_j$};
\filldraw (10,0) circle (1pt) node[align=right,   below] {$y_2$} ;
\end{tikzpicture}
\caption{}
\label{pic:circles angles}
\end{figure}

In order to do this, we proceed as before as in the writing of $F(\delta)$ letting now $\zeta=\xi-y_2-\((\xi-y_2)\cdot \dfrac{e}{|e|}\)\dfrac{e}{|e|}$. Then
$\zeta\cdot \dfrac{e}{|e|}=0$, $(\xi-y_2)\cdot \dfrac{\zeta}{|\zeta|}=|\zeta|$, and
\begin{align*}
\xi-y_2&=\(\(\xi-y_2\)\cdot \dfrac{\zeta}{|\zeta|}\)\,\dfrac{\zeta}{|\zeta|}+\(\(\xi-y_2\)\cdot \dfrac{e}{|e|}\)\,\dfrac{e}{|e|}=
\sin \theta\,\dfrac{\zeta}{|\zeta|}+\cos \theta\,\dfrac{e}{|e|};
\end{align*}
recalling that $\theta$ is the angle between $\xi-y_2$ and $e$.
Then the left hand side of \eqref{eq:upper estimate of angle} can be written as
\begin{equation}\label{eq:definition of Gtheta}
G(\theta)=\text{angle}\(A(x,x_0;\xi,y_2)^{1/2} (\xi-y_2),  A(x,x_0;\xi,y_2)^{1/2}e\)
\end{equation}
and since $\theta>\pi/2$, it follows as in \eqref{eq:split formula for cos F} that
\[
\cos G(\theta)
=
-\dfrac{1+\bar B\, \tan \theta 
}{\sqrt{1+\bar C\, \tan^2 \theta
+
2\,\bar B\, \tan \theta}}
\]
with 
\[
\bar B=\dfrac{\left\langle 
A^{1/2}
\dfrac{\zeta}{|\zeta|}, A^{1/2}\dfrac{e}{|e|}\right\rangle}{\left| A^{1/2}\dfrac{e}{|e|}\right|^2},\qquad
\bar C
=
\dfrac{\left| A^{1/2}\dfrac{\zeta}{|\zeta|}\right|^2}{\left| A^{1/2}\dfrac{e}{|e|}\right|^2}.\]

Let us now analyze the function $G(\theta)$ when $\theta$ is close to $\pi$.
For $s<0$ and with $g$ defined as in \eqref{eq:definition of g}
but with $\bar B$ and $\bar C$ instead of $B$ and $C$, we have
\begin{align*}
\bar \Delta(s)=-1-g(s)
&=
-\dfrac{\(\bar C-\bar B^2\)\,s^2}{\(1+\bar B\,s+\sqrt{1+\bar C\,s^2 +2\,\bar B\,s}\)\sqrt{1+\bar C\,s^2 +2\,\bar B\,s}}.
\end{align*}
Applying the estimates for the last denominator obtained in Step 1 yields  
\[
\bar \Delta(s)\geq -2\,\(\bar C-\bar B^2\)\,s^2\qquad \text{for $-1/\(2\sqrt{\bar C}\)<s<0$}.
\]
In particular, 
since $\lambda/\Lambda\leq \bar C\leq \Lambda/\lambda$, and $|\bar B|\leq \sqrt{\bar C}$,
we obtain the bound
\[
\bar \Delta(s)\geq -2\,\dfrac{\Lambda}{\lambda}\,s^2\qquad \text{for $-1/\(2\sqrt{\bar C}\)<s<0$}.
\]
This implies that 
\[
-1-\cos G(\theta)\geq -2\,\dfrac{\Lambda}{\lambda}\,\tan^2 \theta\]
for $\theta$ such that $-(1/2)\sqrt{\lambda/\Lambda}<\tan \theta <0$.
Now $\tan \theta\sim \theta- \pi$ when $\theta\to \pi^-$, and so
\[
-1+8\,\dfrac{\Lambda}{\lambda}\,(\pi-\theta)^2\geq \cos G(\theta),
\]
for $\pi-\theta_1<\theta<\pi$ with $\theta_1>0$ small depending only on $\Lambda/\lambda$; which implies the following lower bound for the left hand side of \eqref{eq:upper estimate of angle}
\begin{equation}\label{eq:lower bound for Ftheta}
G(\theta)\geq \arccos\(-1+8\,\dfrac{\Lambda}{\lambda}\,(\pi-\theta)^2\),
\end{equation}
for all $\pi-\theta_1<\theta<\pi$.
\setstcolor{blue}
We now choose $\epsilon>0$ so that for each $\xi\in B_j=B_{\epsilon/k}(x_j)$ the angle $\theta=\text{angle} (\xi-y_2,e)$ satisfies $\pi-\theta_1<\theta<\pi$.
Recall once again that $x_0\in S_{kj}$, $y_1\in Tx_0\cap B_j$, and $y_2\in Tx_0$ with $|y_2-y_1|\geq 1/2k$. 
Then $|y_2-x_j|\geq |y_2-y_1|-|x_j-y_1|\geq \dfrac{1}{2k}-\dfrac{\epsilon}{k}
=\(\dfrac{1}{2\epsilon} -1\)\,\dfrac{\epsilon}{k}$;
that is, $y_2\notin B_{\(\frac{1}{2\epsilon} -1\)\,\frac{\epsilon}{k}}(x_j)$. For each $\delta'$ large there is $\epsilon$ sufficiently small such that $\dfrac{1}{2\epsilon} -1=1+\delta'$.
Let $\alpha$ be the angle between the vectors $\overrightarrow{y_2\xi}$ and $-e=\overrightarrow{y_2y_1}$, and consider the convex hull of $B_j$ and $y_2$, i.e., the ice-cream cone containing $B_j$. If $\beta$ is the angle between the vector $\overrightarrow{y_2x_j}$ and the edge of the convex hull, we have $\alpha \leq 2\beta$, see Figure \ref{pic:circles angles}. So $\sin \alpha\leq 2\sin \beta=2\dfrac{\epsilon/k}{|y_2-x_j|}\leq 2\frac{\epsilon/k}{\(\frac{1}{2\epsilon} -1\)\,\epsilon/k}=2/(1+\delta')$, a number than can be made arbitrarily small taking $\epsilon$ small, in particular, it can be made smaller than $\sin \theta_1$.
Therefore, with this choice of $\epsilon$, the ball $B_j$ is determined and the inequality \eqref{eq:lower bound for Ftheta} can be be applied when $\xi \in B_j$. 
Then combining \eqref{eq:upper estimate of angle}, \eqref{eq:definition of Gtheta}, and \eqref{eq:lower bound for Ftheta} we get that 
\[
\arccos\(-1+8\,\dfrac{\Lambda}{\lambda}\,(\pi-\theta)^2\)
\leq
\pi/2+C(\Lambda/\lambda)\,\text{angle} (x-x_0,e)
\]
for $\pi-\theta_1<\theta<\pi$. Since $x\in \Gamma$, $\text{angle} (x-x_0,e)\leq \delta_0$. Thus, if $\theta\to \pi^-$ this yields a contradiction since $\arccos (-1)=\pi$. 
The proof of 
\eqref{eq:intersection empty} is then complete.

{\bf Step 3.} We are now in a position to prove that $x_0\in S_{kj}$ cannot be a point of density for $S_{kj}$.
Recall $S_{kj}=S_k\cap B_j^*$ where $B_j^*=\{x\in \text{dom}(T):Tx\cap B_j\neq \emptyset\}$.
From \eqref{eq:intersection empty}, it follows that 
$B_j^*\cap \Gamma= \emptyset$ with $\Gamma$ the cone in Step 2.
Let $B_r(x_0)$ be the Euclidean ball centered at $x_0$ with radius $r$, then
\[
S_{kj}\cap B_r(x_0)=\(S_{kj}\cap B_r(x_0)\cap \Gamma\)\cup 
\(S_{kj}\cap B_r(x_0)\cap \Gamma^c\)
=
S_{kj}\cap B_r(x_0)\cap \Gamma^c,
\]
and therefore
\[
\dfrac{|S_{kj}\cap B_r(x_0)|_*}{|B_r(x_0)|}\leq \dfrac{|\Gamma^c\cap B_r(x_0)|}{|B_r(x_0)|}=c_{\delta_0} <1,
\]
for all $r$ and so from Lemma \ref{lm:densitypointslebesgue}, $x_0$ is not a point of density for $S_{kj}$.
This completes the proof of Theorem \ref{thm:monotoneHntoR2n+1}.
\end{proof}

\begin{remark}\rm
Under certain properties of the cost function $c$ and that the map $T$ is $c$-cyclically monotone, then it is proven in [GM96, Corollary 3.5] that $T$ is single-valued almost everywhere. 
\end{remark}

\subsection{Inverse maps}
Given a multivalued map $T:\R^n\to \mathcal P(\R^n)$ the domain of $T$ is the set $\text{dom}(T)=\{x\in \R^n: Tx\neq \emptyset\}$ and the range of $T$ the set $\text{ran}(T)=\bigcup_{x\in \R^n}Tx$.
The inverse of $T$ is the multivalued map $T^{-1}:\R^n\to \mathcal P(\R^n)$ defined by $T^{-1}y=\{x\in \R^n:y\in Tx\}$.
Clearly $\text{dom}(T^{-1})=\text{ran}(T)$.

If $T$ is $h$-monotone with $h$ even, then $T^{-1}$ is $h$-monotone and from Theorem \ref{thm:monotoneHntoR2n+1} $T^{-1}$ is single-valued a.e. and we obtain the following.

\begin{corollary}[of Aleksandrov type]\label{cor:aleksandrovlemma}
If $T:\R^n\to \mathcal P(\R^n)$ is a multivalued map that is $h$-monotone, with $h$ satisfying the assumptions of Theorem \ref{thm:monotoneHntoR2n+1}, then the set
\[
S=\left\{p\in \R^n: \text{there exist $x,y\in \R^n$, $x\neq y$, such that $p\in T(x)\cap T(y)$}\right\}
\]
has measure zero.
\end{corollary}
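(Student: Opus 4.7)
The plan is to recognize that $S$ is precisely the set where the inverse map $T^{-1}$ fails to be single-valued, and then apply Theorem \ref{thm:monotoneHntoR2n+1} to $T^{-1}$. Unwinding definitions, $p\in S$ means there exist $x\neq y$ in $\R^n$ with $p\in T(x)\cap T(y)$, or equivalently $\{x,y\}\subset T^{-1}(p)$. Hence
\[
S=\{p\in \text{dom}(T^{-1}):T^{-1}(p)\text{ is not a singleton}\},
\]
and $|S|=0$ would follow at once if $T^{-1}$ itself were $h$-monotone, to which Theorem \ref{thm:monotoneHntoR2n+1} could then be applied.

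The key step is therefore to verify that $T^{-1}$ satisfies \eqref{eq:map T is c-monotone h}, using that $h$ is even (as is implicit in the short discussion preceding the corollary). Given $u\in T^{-1}(p)$ and $v\in T^{-1}(q)$, i.e. $p\in T(u)$ and $q\in T(v)$, the $h$-monotonicity of $T$ applied with $x=u,\,y=v,\,\xi=p,\,\zeta=q$ reads
\[
h(u-p)+h(v-q)\leq h(u-q)+h(v-p).
\]
Since $h(z)=h(-z)$ by evenness, this is equivalent to
\[
h(p-u)+h(q-v)\leq h(p-v)+h(q-u),
\]
which is exactly the inequality \eqref{eq:map T is c-monotone h} for $T^{-1}$ evaluated at the pairs $(p,u)$ and $(q,v)$. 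Thus $T^{-1}$ is $h$-monotone.

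Finally, since $h$ satisfies the hypotheses of Section \ref{sec:preliminaries}, Theorem \ref{thm:monotoneHntoR2n+1} applied to $T^{-1}$ gives that $T^{-1}(p)$ is a singleton for almost every $p\in \text{dom}(T^{-1})$, and by the identification above this is precisely $|S|=0$. I do not expect any genuine obstacle here: the only delicate point is the evenness of $h$, which is what makes the cost $c(x,y)=h(x-y)$ symmetric in its arguments and hence permits the passage from $T$ to $T^{-1}$; beyond that the argument is a direct reduction to the main theorem.
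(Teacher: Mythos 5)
Your proof takes the same route as the paper: identify $S$ as the set where $T^{-1}$ fails to be single-valued, check that $T^{-1}$ is $h$-monotone (which uses evenness of $h$, as noted in the paper just before the corollary), and apply Theorem \ref{thm:monotoneHntoR2n+1} to $T^{-1}$. Your write-up is correct and in fact spells out the evenness computation more explicitly than the paper does.
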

\begin{proof}
The map $T^{-1}:\R^n\to \mathcal P(\R^n)$ is $h$-monotone and from Theorem \ref{thm:monotoneHntoR2n+1}
is single valued a.e. Since $S=\{p\in \R^n:\text{$T^{-1}p$ is not a singleton}\}$, the corollary follows.
\end{proof}

Using Corollary \ref{cor:aleksandrovlemma}, we can define the push forward measure of an $h$-monotone map.
Let $f\in L^1_{\text{loc}}(\R^n)$, $f\geq 0$, and let $T:\R^n\to \mathcal P(\R^n)$ be an $h$-monotone map that is measurable, i.e., $T^{-1}(E)$ is a Lebesgue measurable subset of $\R^n$ for each $E\subset \R^n$ Lebesgue measurable.
Define
\[
\mu(E)=\int_{T^{-1}(E)}f(x)\,dx.
\]
Then $\mu$ is $\sigma$-additive. In fact, if $\{E_i\}_{i=1}^\infty$ are disjoint Lebesgue measurable sets, then it follows from Corollary \ref{cor:aleksandrovlemma} that $|T^{-1}(E_i)\cap T^{-1}(E_j)|=0$ for $i\neq j$; $T^{-1}(E)=\{x\in \R^n:T(x)\cap E\neq \emptyset \}$.

\subsection{Maximal $h$-monotone maps}
If $T:\R^n\to \mathcal P(\R^n)$ is a multivalued map, then the graph of $T$ is by definition $\text{graph}(T)=\{(x,y)\in \R^n\times \R^n: y\in Tx\}$. If $T_1,T_2$ are two multivalued maps, then $T_1\preceq T_2$ iff $\text{graph}(T_1)\subseteq \text{graph}(T_2)$, that is, for each $x\in \R^n$, $T_1x\subset T_2x$.
The relation $\preceq$ is a partial order on the set of all multivalued maps, and therefore the class of all multivalued maps with the relation $\preceq$ is a partially ordered set.
Let $\mathcal M_h$ denote the class of all multivalued maps that are $h$-monotone. Then the set $\(\mathcal M_h,\preceq\)$ is  inductive, that is, if $\mathcal S\subset \mathcal M_h$ is a totally ordered set or chain (that is, given $T_1,T_2\in \mathcal S$ then either $T_1\preceq T_2$ or $T_2\preceq T_1$), then $\mathcal S$ has a upper bound in $\mathcal M_h$, i.e., there exists $T\in \mathcal M_h$ such that $R\preceq T$ for all $R\in \mathcal S$.
Indeed, let $\tilde T$ be the map with $\text{graph}(\tilde T)=\bigcup_{T\in \mathcal S}\text{graph}(T)$, i.e., $\tilde Tx=\bigcup_{T\in \mathcal S}Tx$. Let us show that $\tilde T\in \mathcal M_h$. If $x,y\in \text{dom}(\tilde T)$, $\xi\in \tilde Tx$, and $\zeta\in \tilde Ty$, then there exists $T_1,T_2\in \mathcal S$ such that $\xi \in T_1x$ and $\zeta\in T_2y$. Since $\mathcal S$ is a chain, it follows that $T_1\preceq T_2$ or $T_2\preceq T_1$, that is,
$T_1z\subset T_2z$ or $T_2z\subset T_1z$ for all $z$. In particular, 
$\xi \in T_2x$ or $ \zeta \in T_1y$ and the $h$-monotonicity of $\tilde T$ follows from the $h$-monotonicity of $T_1$ or $T_2$.
Therefore $\(\mathcal M_h,\preceq\)$ is inductive and from Zorn's lemma, $\(\mathcal M_h,\preceq\)$ has a maximal element, i.e., there exists $T\in \mathcal M_h$ such that $R\preceq T$ for all $R\in \mathcal M_h$.

We say $T\in \mathcal M_h$ is maximal $h$-monotone if whenever $T'\in \mathcal M_h$ satisfies $T\preceq T'$ then we must have $T'=T$.

Given $T\in \mathcal M_h$, there exists $\tilde T\in \mathcal M_h$ such that $\tilde T$ is maximal $h$-monotone with $T\preceq \tilde T$. Indeed, consider the class $\mathcal R$ of all $R\in \mathcal M_h$ with $T\preceq R$. If $\mathcal S\subset \mathcal R$ is any chain, then proceeding as before the map defined by $\tilde Rx=\bigcup_{R\in \mathcal S}Rx$ is $h$-monotone and is an upper bound for $\mathcal S$. Therefore by Zorn's lemma, $\mathcal R$ has a maximal element.

We have the following characterization: The map $T\in \mathcal M_h$ is maximal $h$-monotone if given $(x,\xi)\in \R^n\times \R^n$ satisfying the condition
\[
h(x-\xi)+h(y-\zeta)\leq h(x-\zeta)+h(y-\xi)
\] 
for each $y\in \text{dom} (T)$ and for all $\zeta\in Ty$, then we must have $\xi\in Tx$.
Indeed, suppose there exists $(x_0,\xi_0)\in \R^n\times \R^n$ such that 
$h(x_0-\xi_0)+h(y-\zeta)\leq h(x_0-\zeta)+h(y-\xi_0)$ for all $y\in \text{dom} (T)$ and for all $\zeta\in Ty$, with $\xi_0\notin Tx_0$. If we define $T'x=Tx$ for $x\neq x_0$ and $T'x=Tx_0\cup \xi_0$ for $x=x_0$, then $T'$ is $h$-monotone and $\text{graph}(T)\subsetneqq  \text{graph}(T')$; so $T$ is not maximal.
Reciprocally, if the condition holds and $T\preceq T'$ with $T'\in \mathcal M_h$, then we want to prove that $T'x\subset Tx$ for all $x\in \R^n$. 
If $x\in \text{dom}(T)$, since $T\preceq T'$, then $x\in \text{dom}(T')$ and let $\xi\in T'x$. Since $T'$ is $h$-monotone, $h(x-\xi)+h(y-\zeta)\leq h(x-\zeta)+h(y-\xi)$ for all $y\in \text{dom}(T')$ and for all $\zeta\in T'y$. 
Since $\text{dom}(T)\subset \text{dom}(T')$, the last inequality holds for all $y\in \text{dom}(T)$ and for all $\zeta \in Ty$, and hence $\xi\in Tx$.
It remains to prove that if $Tx=\emptyset$, then $T'x=\emptyset$. Suppose $T'x\neq \emptyset$. Then there is $\xi\in T'x$, and since $T'$ is $h$-monotone we have 
$h(x-\xi)+h(y-\zeta)\leq h(x-\zeta)+h(y-\xi)$ for all $y\in \text{dom}(T')$ and for all $\zeta\in T'y$.
In particular, this holds for all $y\in \text{dom}(T)$ and for all $\zeta\in Ty$. Consequently, $\xi \in Tx$ and so $Tx\neq \emptyset$.

\begin{theorem}\label{eq:GcontinuousrelativetoHminusZ}
Let $T:\R^n\to \mathcal P(\R^n)$ be a multivalued map that is maximal $h$-monotone, 
%
and let $Z\subset \R^n$ be a set of measure zero
such that $T$ is single valued in $\R^n\setminus Z$.
Then $T$ is continuous relative to $\R^n\setminus Z$.
\end{theorem}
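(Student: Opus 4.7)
My plan is to reduce continuity to two properties: (a) the graph of $T$ is closed in $\R^n \times \R^n$, and (b) $T$ is locally bounded at each $x_0\in\R^n\setminus Z$. Granting both, continuity of the single-valued map $T\colon \R^n\setminus Z \to \R^n$ at $x_0$ is routine: for any $x_k \to x_0$ with $x_k \in \R^n\setminus Z$, the sequence $\xi_k = T(x_k)$ is bounded by (b), hence relatively compact, and any convergent subsequence $\xi_{k_j}\to\xi_*$ satisfies $(x_0,\xi_*)\in\text{graph}(T)$ by (a), so $\xi_*$ belongs to the singleton $T(x_0)$; the full sequence therefore converges to $T(x_0)$.

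Property (a) will follow directly from the maximality characterization stated just before the theorem: if $(x_k,\xi_k)\in\text{graph}(T)$ converge to $(x_0,\xi_0)$, then continuity of $h$ lets me pass to the limit in $h(x_k-\xi_k)+h(y-\eta)\leq h(x_k-\eta)+h(y-\xi_k)$ for every $(y,\eta)\in\text{graph}(T)$, and the resulting inequality for $(x_0,\xi_0)$ against every point of $\text{graph}(T)$ forces $\xi_0\in T(x_0)$ by maximality.

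The main work, and the step I expect to be the obstacle, is property (b). I would argue by contradiction: assume $|\xi_{k_j}|\to\infty$ along a subsequence and write $\xi_{k_j}=r_j\omega_j$ with $r_j\to\infty$ and, after further extraction, $\omega_j\to\omega\in S^{n-1}$. Because $Z$ has measure zero, the set of admissible reference points $y\in(\R^n\setminus Z)\setminus\{x_0\}$ is dense, and for each such $y$ the value $\eta=T(y)$ is a single point. The key computation is the asymptotic expansion, for bounded $a$ as $r\to\infty$,
\begin{equation*}
h(a-r\omega)=r^p h(-\omega)+r^{p-1}\langle Dh(-\omega),a\rangle + O(r^{p-2}),
\end{equation*}
obtained from positive homogeneity $h(a-r\omega)=r^p h(-\omega+a/r)$ followed by a second-order Taylor expansion of $h$ at $-\omega$. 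Applying this at $a=x_{k_j}$ and $a=y$ and using the $h$-monotonicity rewritten as $h(x_{k_j}-\xi_{k_j})-h(y-\xi_{k_j})\leq h(x_{k_j}-\eta)-h(y-\eta)$, whose right-hand side is bounded independently of $j$, then dividing by $r_j^{p-1}$ and letting $j\to\infty$, yields $\langle Dh(-\omega),x_0-y\rangle\leq 0$ for all $y$ in a dense subset of $\R^n$, hence for all $y\in\R^n$ by continuity; consequently $Dh(-\omega)=0$.

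To close the contradiction I would use Euler's identity $\langle Dh(-\omega),-\omega\rangle=p\,h(-\omega)$ together with the fact that the strict ellipticity \eqref{eq:strict ellipticity of D2h} forces $h>0$ on $S^{n-1}$: a zero $\omega_0$ of $h$ on the sphere would be a global minimum hence a critical point, and differentiating $h(t\omega_0)\equiv 0$ twice would give $\langle D^2h(\omega_0)\omega_0,\omega_0\rangle=0$, contradicting positive definiteness. Thus $p\,h(-\omega)>0$, so $Dh(-\omega)\neq 0$, the desired contradiction. Once the expansion above is carefully justified, the rest is formal.
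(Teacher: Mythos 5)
Your proposal is correct, and its overall structure mirrors the paper's: both reduce continuity to (i) local boundedness of $T$ near the point in question and (ii) closedness of $\text{graph}(T)$, the latter obtained exactly as you do, by passing to the limit in the $h$-monotonicity inequality and invoking the maximality characterization to conclude $\xi_*\in T(x_0)$. The genuine divergence is in step (i). The paper simply cites the $L^\infty$-estimate (Theorem 2.1 of \cite{Gutierrez-Montanari:Linfty-estimates}) to get local boundedness, so its proof is short but leans on the companion paper. You instead give a self-contained argument: writing a hypothetically unbounded $\xi_{k_j}=r_j\omega_j$ and exploiting the homogeneity of $h$ to expand $h(a-r\omega)=r^p h(-\omega)+r^{p-1}\langle Dh(-\omega),a\rangle+O(r^{p-2})$, you cancel the top-order terms in the monotonicity inequality, divide by $r_j^{p-1}$, and force $Dh(-\omega)=0$ against a dense set of test points $y$; Euler's identity plus $h>0$ on $S^{n-1}$ (itself a consequence of nonnegativity and the strict ellipticity \eqref{eq:strict ellipticity of D2h}) then gives a contradiction. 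This buys a proof that does not depend on the external $L^\infty$-estimate, at the cost of the extra asymptotic computation; it also quietly uses that $T$ is single-valued a.e.\ (Theorem \ref{thm:monotoneHntoR2n+1}) to supply a dense set of reference points $y$ with $T(y)$ a single bounded value, which is available here. One small thing to tighten when writing this up: since the unit direction $\omega_j$ varies with $j$, the expansion should be carried out at $-\omega_j$ and then $\omega_j\to\omega$ and the $C^1$-continuity of $Dh$ used to pass to the limit, rather than expanding at the fixed $-\omega$ from the start; this is harmless but should be said.
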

\begin{proof}
We first observe that since $T$ is $h$-monotone, then from the $L^\infty$-estimate \cite[Theorem 2.1]{Gutierrez-Montanari:Linfty-estimates} it follows that $T$ is locally bounded.
Let $x_0\in \R^n\setminus Z$ and suppose $T$ is discontinuous relative to $\R^n\setminus Z$ at $x_0$.
Then there exists $\delta>0$ such that for each $\epsilon>0$ there exists $x_\epsilon\in \R^n\setminus Z$ such that $|x_\epsilon-x_0|<\epsilon$
and $|p_\epsilon-p_0|>\delta$ with $p_0\in T(x_0)$ and $p_\epsilon\in T(x_\epsilon)$.
From the local boundedness of $T$, it follows that there exists a constant $M>0$ such that $|p_\epsilon|\leq M$ for all $\epsilon<1$.
Then there exists a subsequence $p_{\epsilon_j}\to p_1$ as $\epsilon_j \to 0$, and so $|p_1-p_0|\geq \delta$.
On the other hand, from the $h$-monotonicity
\[
h(x_{\epsilon_j}-p_{\epsilon_j})+h(x-p)\leq h(x_{\epsilon_j}-p)+h(x-p_{\epsilon_j})\qquad \forall p\in Tx.
\]
%
Letting $\epsilon_j\to 0$ yields
$h(x_0-p_1)+h(x-p)\leq h(x_0-p)+h(x-p_1)$ for all $p\in Tx$, and since $T$ is maximal, we get $p_1\in T(x_0)$, that is,
$T$ is not single valued at $x_0$, a contradiction.
\end{proof}

\begin{corollary}\label{cor:G(H^n)ismeasurable}
Under the assumptions of Theorem \ref{eq:GcontinuousrelativetoHminusZ}, we have that 
$T(\R^n)$ is a Lebesgue measurable set in $\R^n$.
\end{corollary}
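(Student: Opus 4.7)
The plan is to express $T(\R^n)$ as a countable union of compact sets, by first proving that $\text{graph}(T)$ is closed in $\R^n \times \R^n$ and then exploiting local boundedness. Since the continuous image of a compact set is compact, this will yield that $T(\R^n)$ is $\sigma$-compact, hence Borel, hence Lebesgue measurable. Note that this is strictly stronger than the stated conclusion, but comes essentially for free from the maximality assumption together with Theorem \ref{thm:monotoneHntoR2n+1}.

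The key step is to show $\text{graph}(T)=\{(x,\xi):\xi\in T(x)\}$ is closed. Take a sequence $(x_n,\xi_n)\in \text{graph}(T)$ converging to $(x_0,\xi_0)$. For every $y\in \text{dom}(T)$ and every $\zeta\in T(y)$, the $h$-monotonicity of $T$ gives
\[
h(x_n-\xi_n)+h(y-\zeta)\leq h(x_n-\zeta)+h(y-\xi_n).
\]
Letting $n\to\infty$ and using continuity of $h$,
\[
h(x_0-\xi_0)+h(y-\zeta)\leq h(x_0-\zeta)+h(y-\xi_0).
\]
By the characterization of maximal $h$-monotonicity established immediately before Theorem \ref{eq:GcontinuousrelativetoHminusZ}, we conclude $\xi_0\in T(x_0)$, i.e., $(x_0,\xi_0)\in \text{graph}(T)$. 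I expect this to be the main obstacle of the proof, and it is also the step where maximality is essential: a merely $h$-monotone map need not have closed graph.

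With the graph closed, the rest is routine. As already used in the opening line of the proof of Theorem \ref{eq:GcontinuousrelativetoHminusZ}, $T$ is locally bounded via the $L^\infty$-estimate of \cite[Theorem 2.1]{Gutierrez-Montanari:Linfty-estimates}. Hence for each integer $k\geq 1$, the image $T(\overline{B_k(0)})$ is bounded, and so
\[
G_k:=\text{graph}(T)\cap \bigl(\overline{B_k(0)}\times \R^n\bigr)
\]
is closed and bounded, hence compact. Projecting onto the second factor via the continuous map $\pi(x,\xi)=\xi$, we obtain
\[
T(\R^n)=\bigcup_{k=1}^{\infty}\pi(G_k),
\]
which is a countable union of compact sets. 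In particular, $T(\R^n)$ is $F_\sigma$, Borel, and Lebesgue measurable, completing the proof.
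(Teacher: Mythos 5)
Your proof is correct, and it takes a genuinely different route from the paper's. The paper deduces the corollary from Theorem \ref{eq:GcontinuousrelativetoHminusZ}: it looks at the component functions $T_i$ and observes that, being continuous relative to $\R^n\setminus Z$, each sub-level set $\{x\in\R^n\setminus Z: T_i(x)\le\alpha\}$ is relatively closed and hence Lebesgue measurable. You instead isolate the closed-graph property of a maximal $h$-monotone map --- which is really the mechanism already at work inside the proof of Theorem \ref{eq:GcontinuousrelativetoHminusZ}, where $p_1\in T(x_0)$ is obtained by passing to a limit and invoking the maximality characterization --- and combine it with the same local-boundedness input (the $L^\infty$-estimate) to exhibit $T(\R^n)$ as a countable union of projections of compact pieces of the graph. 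This buys you a stronger and more robust conclusion: $T(\R^n)$ is $\sigma$-compact, hence Borel, hence Lebesgue measurable, and the argument applies uniformly without having to treat separately what $T$ does on the null set $Z$ or to worry about whether a continuous map sends $\R^n\setminus Z$ to a measurable set. The only ingredient you should be sure you are entitled to is that $T(\overline{B_k(0)})$ is bounded, i.e.\ local boundedness holds on arbitrary balls; this is precisely what the first sentence of the paper's proof of Theorem \ref{eq:GcontinuousrelativetoHminusZ} asserts, so you are on solid ground.
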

\begin{proof}
Let $T_i$ be the $i$-th component of $T$. Let $Z$ be the set of measure zero such that $T$ is single valued in $\R^n\setminus Z$.
The function $T_i$ is continuous relative to $\R^n\setminus Z$ and therefore for each $\alpha$ the set
$\{x\in \R^n\setminus Z: T_i(x)\leq \alpha\}$ is relatively closed and so Lebesgue measurable.
\end{proof}

\begin{theorem}\label{thm:monge ampere measure for h monotone maximal}
Under the assumptions of Theorem \ref{eq:GcontinuousrelativetoHminusZ}, the class
\[
\Sigma=\left\{E\subset \R^n: T(E)\text{ is Lebesgue measurable} \right\}
\]
is a $\sigma$-algebra, and the set function
\[
\mu(E)=|T(E)|
\]
is $\sigma$-additive in $\Sigma$, that is, $(\R^n,\Sigma,\mu)$ is a measure space.
\end{theorem}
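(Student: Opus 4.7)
The plan is to verify the $\sigma$-algebra axioms for $\Sigma$ and the $\sigma$-additivity of $\mu$ by exploiting the almost-everywhere injectivity of $T$ provided by Corollary \ref{cor:aleksandrovlemma}. Denote
\[
S = \{p \in \R^n : T^{-1}(p) \text{ is not a singleton}\},
\]
which has Lebesgue measure zero by that corollary, and recall from Corollary \ref{cor:G(H^n)ismeasurable} that $T(\R^n)$ is Lebesgue measurable.

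The easy cases come first: $\emptyset \in \Sigma$ since $T(\emptyset) = \emptyset$, and $\Sigma$ is closed under countable unions because $T\(\bigcup_i E_i\) = \bigcup_i T(E_i)$ is a countable union of Lebesgue measurable sets whenever each $E_i \in \Sigma$.

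The key observation driving both closure under complements and $\sigma$-additivity is that for any two disjoint sets $A, B \subset \R^n$ one has $T(A) \cap T(B) \subset S$: a point of the intersection would have at least two distinct preimages. For closure under complements, I would write
\[
T(E^c) = \(T(\R^n) \setminus T(E)\) \cup \(T(E^c) \cap T(E)\).
\]
The first set on the right is Lebesgue measurable by Corollary \ref{cor:G(H^n)ismeasurable} together with $E \in \Sigma$, and the second is contained in the null set $S$, hence Lebesgue measurable by completeness of Lebesgue measure. Thus $E^c \in \Sigma$ and $\Sigma$ is a $\sigma$-algebra.

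Finally, $\sigma$-additivity of $\mu$ on a disjoint sequence $\{E_i\}_{i=1}^\infty \subset \Sigma$ follows from the same observation: $|T(E_i) \cap T(E_j)| = 0$ for $i \neq j$, so
\[
\mu\(\bigcup_i E_i\) = \left|\bigcup_i T(E_i)\right| = \sum_i |T(E_i)| = \sum_i \mu(E_i),
\]
where the middle equality combines pairwise null overlaps with standard monotone convergence. I do not expect any serious obstacle; the argument is essentially bookkeeping once the two corollaries are in hand. The only subtle point is the appeal to completeness of the Lebesgue $\sigma$-algebra in order to absorb the null set $T(E^c) \cap T(E)$, and the pairwise overlaps $T(E_i)\cap T(E_j)$, into the measurable class without having assumed any a priori measurability for them.
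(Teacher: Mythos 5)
Your proof is correct and follows essentially the same route as the paper: closure under countable unions is immediate, closure under complements comes from the same decomposition $T(\R^n\setminus E)=\bigl[T(\R^n)\setminus T(E)\bigr]\cup\bigl[T(\R^n\setminus E)\cap T(E)\bigr]$ combined with Corollaries \ref{cor:G(H^n)ismeasurable} and \ref{cor:aleksandrovlemma}, and $\sigma$-additivity rests on the pairwise-null overlaps $|T(E_i)\cap T(E_j)|=0$. The only difference is cosmetic: the paper cites \cite[Theorem 1.1.13]{Gut:book} for the $\sigma$-additivity step, whereas you spell out the argument and note the role of completeness of Lebesgue measure, which is a fair point to make explicit.
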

\begin{proof}
If $E_k\in\Sigma$, $k=1,2,\cdots$, then $T\left(\cup_{k=1}^\infty E_k\right)=\cup_{k=1}^\infty T(E_k)\in \Sigma$.
For any set $E\subset \R^n$ we have
\[
T(\R^n\setminus E)=\left[T(\R^n)\setminus T(E)\right]\cup \left[T(\R^n\setminus E)\cap T(E) \right].
\]
Suppose $E\in \Sigma$, then from Corollary \ref{cor:aleksandrovlemma}, the set $T(\R^n\setminus E)\cap T(E)$ has measure zero and then from Corollary \ref
{cor:G(H^n)ismeasurable} we obtain $\R^n\setminus E\in \Sigma$.
The proof of the $\sigma$-additivity is the same as the proof of \cite[Theorem 1.1.13]{Gut:book} with Lemma 1.1.12 there now replaced by 
Corollary \ref{cor:aleksandrovlemma} and $\partial u$ replaced by $T$.
\end{proof}

\subsection{On the rectifiability of $c$-monotone sets}\label{sec:rectifiability}\footnote{We like to thank Robert McCann for useful comments and for mentioning his paper \cite{McCPW} from where the argument in this section is derived.}

Let $c(x,y)$ be a $C^2$ cost from $\R^n\times \R^n$ to $\R$.
Denote by
\[
D^2_{xy}c(x,y)
\]
the $n\times n$ matrix having entries $\dfrac{\partial^2 c}{\partial x_i\partial y_j}(x,y)$.

Let $\Phi:\R^n\times \R^n\to \R^n\times \R^n$ be the Cayley transform defined by 
\[
\Phi(x,y)=\dfrac{1}{\sqrt2}\(x+y,x-y\).
\]

Let us fix a point $(x_0,y_0)$ and suppose that
\begin{equation}\label{eq:non singular point bis}
\det D^2_{xy}c(x_0,y_0)\neq 0,
\end{equation}
that is, the matrix $A:= -D^2_{xy}c(x_0,y_0)$ is non-singular.

Let us write
\[
c(x,y)=-(Ax)\cdot y+c(x,y)+(Ax)\cdot y
\]
and set $G(x,y)=c(x,y)+(Ax)\cdot y$. We have $D^2_{xy}\((Ax)\cdot y\)=A$,
so 
\[
D^2_{xy}c(x,y)=-A+D^2_{xy}G(x,y).
\]
Obviously, $D^2_{xy}G(x_0,y_0)=0$, and so for each $\e>0$ there is a convex neighborhood $N$ of 
$(x_0,y_0)$ such that $\|D^2_{xy}G\|_{L^\infty (N)}\leq \e$.


Let $S\subset \R^n\times \R^n$ be a $c$-monotone set, that is,  
for all points $(x,y),(x',y')\in S$ we have
\[
c(x,y)+c(x',y')\leq c(x,y')+c(x',y).
\]
In particular, for $(x,y),(x',y')\in S\cap N$ we can write 
that 
\begin{align*}
&-(Ax)\cdot y+G(x,y)-(Ax')\cdot y'+G(x',y')\\
&\qquad \leq -(Ax)\cdot y'+G(x,y')-(Ax')\cdot y+G(x',y).
\end{align*}
Rearranging terms yields
\begin{align*}
&(Ax'-Ax)\cdot (y-y')\leq G(x,y')-G(x,y)-\(G(x',y')-G(x',y)\).
\end{align*}
Now 
\begin{align*}
G(x,y')-G(x,y)&=\int_0^1 D_yG\(x,y+s(y'-y)\)\cdot (y'-y)\,ds\\
G(x',y')-G(x',y)&=\int_0^1 D_yG\(x',y+s(y'-y)\)\cdot (y'-y)\,ds,
\end{align*}
so
\begin{align*}
&G(x,y')-G(x,y)-\(G(x',y')-G(x',y)\)\\
&=\int_0^1 D_yG\(x,y+s(y'-y)\)\cdot (y'-y)\,ds
-\int_0^1 D_yG\(x',y+s(y'-y)\)\cdot (y'-y)\,ds\\
&=
\int_0^1\int_0^1 \langle D^2_{xy}G\(x'+t(x-x'),y+s(y'-y)\)(x-x'),y'-y\rangle\,dt ds.
\end{align*}
Hence
\[
|G(x,y')-G(x,y)-\(G(x',y')-G(x',y)\)|\leq \e\,|x-x'|\,|y-y'|
\]
and so 
\begin{equation}\label{eq:estimate with A and epsilon}
A(x'-x)\cdot (y-y')\leq \e\,|x-x'|\,|y-y'| \quad \forall (x,y),(x',y')\in S\cap N.
\end{equation}

Let
\[
\sqrt2\,u=Ax+y,\quad \sqrt2\,v=Ax-y,\quad \sqrt2\,u'=Ax'+y',\quad \sqrt2\,v'=Ax'-y';
\]
\[
\Delta x=Ax-Ax',\quad \Delta y=y-y',\quad \Delta u=u-u',\quad \Delta v=v-v'.
\]
Then 
\[
\Delta x+\Delta y=\sqrt2\,\Delta u,\qquad \Delta x-\Delta y=\sqrt2\,\Delta v,
\]
and so 
\[
\Delta u+\Delta v=\sqrt2\,\Delta x,\qquad \Delta u-\Delta v=\sqrt2\,\Delta y,
\]
From \eqref{eq:estimate with A and epsilon} we then have
\[
-\Delta x\cdot \Delta y\leq \e\,|x-x'|\,|\Delta y|=\e\,|A^{-1}A(x-x')|\,|\Delta y|
\leq \e\,\|A^{-1}\|\,|\Delta x|\,|\Delta y|,
\]
that is,
\[
\Delta x\cdot \Delta y\geq -\e\,\|A^{-1}\|\,|\Delta x|\,|\Delta y|.
\]
Now 
\begin{align*}
|\Delta u|^2-|\Delta v|^2
=
2\,\Delta x\cdot \Delta y&\geq -2\,\e\,\|A^{-1}\|\,|\Delta x|\,|\Delta y|\\
&=
-\,\e\,\|A^{-1}\|\,|\Delta u-\Delta v|\,|\Delta u+\Delta v|\\
&\geq 
-\,\e\,\|A^{-1}\|\,\(|\Delta u|^2+|\Delta v|^2\),
\end{align*}
because $|a-b|^2|a+b|^2\leq \(|a|^2+|b|^2\)^2$ for any vectors $a,b$.
We then get
\begin{equation}\label{eq:Lipschitz continuity}
|\Delta v|\leq \sqrt{\dfrac{1+\e\,\|A^{-1}\|}{1-\e\,\|A^{-1}\|}}\,|\Delta u|,
\end{equation}
for $\epsilon<1/\|A^{-1}\|$. 
This shows in particular that if $u=u'$ then $v=v'$.

Now set $\Psi(x,y)=\Phi(Ax,y)=\dfrac{1}{\sqrt2}\(Ax+y,Ax-y\)=\(\Psi_1(x,y),\Psi_2(x,y)\)$. Then $\Psi:\R^n\times \R^n\to \R^n\times \R^n$ is an invertible linear transformation and in particular $\Psi:S\cap N\to \Psi(S\cap N)\subset \R^n\times \R^n$ is a bijection. Let $\Pi_1(x,y)=x$ and $\Pi_2(x,y)=y$ be the coordinate projections.
From \eqref{eq:Lipschitz continuity}, given $u\in \Pi_1\(\Psi(S\cap N)\)$ there is a unique $v\in \Pi_2\(\Psi(S\cap N)\)$  such that $(u,v)\in \Psi(S\cap N)$ so if we define $F(u)=v$, then 
\[
F:\Pi_1\(\Psi(S\cap N)\)\to \Pi_2\(\Psi(S\cap N)\)
\] 
is a Lipschitz function. 
The graph of $F$, $G(F)$, equals to $\Psi(S\cap N)$, and since $\Psi$ is a bijection we get $S\cap N=\Psi^{-1}(G(F))$; $\Psi^{-1}(u,v)=\(A^{-1}\(\dfrac{1}{\sqrt2}(u+v)\),\dfrac{1}{\sqrt2}(u-v)\)$. That is, $S\cap N$ is the image of a Lipschitz graph by a linear transformation.

This proves that if $(x_0,y_0)$ is a point satisfying \eqref{eq:non singular point bis}, and $S$ is a $c$-monotone set, then there is a neighborhood $N$ of $(x_0,y_0)$ such that the set $S\cap N$ is the image of a Lipschitz graph by a linear transformation. 
Notice that this last fact does not imply in general that $Tx$ is single valued a.e.. In fact, if $f:\R^n\to \R$ is a smooth function and we define the multivalued map $Tx=\{f(x)+k:k\in \Z\}$, then the set $S=\{(x,Tx):x\in \R^n\}$ is union of smooth graphs but the map $T$ is not single valued at any point.
In our case if we consider a multivalued map $T$ satisfying
\[
c(x,\xi)+c(y,\zeta)\leq c(x,\zeta)+c(y,\xi)\quad \forall \xi\in Tx,\zeta \in Ty
\]
and define the set
$
S=\{(x,\xi):\xi\in Tx; x\in \text{dom\,}T \}$, it follows that $S$ is a $c$-monotone set.

The case left is when $(x_0,y_0)$ does not satisfy \eqref{eq:non singular point bis}.
For our cost $c(x,y)=h(x-y)$ with $h$ homogeneous of degree $p\geq 2$ and having Hessian positive definite in the unit sphere, \eqref{eq:non singular point bis} may not happen only when $x_0=y_0$.
Let $D=\{(x,x):x\in \R^n\}$ be the diagonal. Write 
$S=\(S\cap D\)\cup \(S\cap D^c\)$. If $(x_0,y_0)\in S\cap D^c$, then $(x_0,y_0)$ satisfies \eqref{eq:non singular point bis} and therefore there is a neighborhood $N$ of $(x_0,y_0)$ such that $S\cap D^c\cap N$ is the image of a Lipschitz graph.
On the other hand, the map $T:\R^n\to \R^n\times \R^n$ given by $Tx=(x,x)$ is Lipschitz and 
$S\cap D=T\(\Pi_1(S\cap D)\)$, that is, $S\cap D$ is $n$-rectifiable in the sense of 
\cite[Sect. 3.2.14, Definition (1)]{federerbook}.

 


\section{Appendix}\label{sec:appendix}
\setcounter{equation}{0}

We conclude the paper with the following lemma that yields the differentiability property we use in the proof of Theorem  \ref{thm:monotoneHntoR2n+1}.

\begin{lemma}\label{lm:densitypointslebesgue}
Let $S\subset \R^n$ be a set not necessarily Lebesgue measurable and consider
\[
f(x):=\limsup_{r\to 0} \dfrac{|S\cap B_r(x)|_*}{|B_r(x)|},
\]
where $|\cdot |_*$ and $|\cdot |$ denote the Lebesgue outer measure and Lebesgue measure respectively.
If
\[
M=\{x\in S: f(x)<1\},
\]
then $|M|=0$.
Here $B_r(x)$ is the Euclidean ball centered at $x$ with radius $r$. 

Moreover, if $B_r(x)$ is a ball in a metric space $X$ and 
$\mu^*$ is a Carath\'eodory outer measure on $X$\footnote{From \cite[Theorem (11.5)]{wheeden-zygmund:book} every Borel subset of $X$ is Carath\'eodory measurable.}, then a similar result holds true for all $S\subset X$.
\end{lemma}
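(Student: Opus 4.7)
The plan is to reduce the statement to the classical Lebesgue density theorem by replacing the (possibly non-measurable) set $S$ by a measurable envelope. Specifically, I would first construct a Borel set $H\supset S$ with the envelope property
\[
|S\cap E|_* = |H\cap E|\qquad\text{for every Lebesgue measurable }E\subset \R^n.
\]
Such an $H$ exists by standard reasoning: invoking the regularity of Lebesgue outer measure together with $\sigma$-finiteness, for each integer $m\geq 1$ I pick a $G_\delta$-set $G_m\supset S\cap B_m(0)$ with $|G_m|=|S\cap B_m(0)|_*$ and set $H=\bigcup_m G_m$. A short Carath\'eodory-splitting argument (using $|H|=|H\cap E|+|H\cap E^c|$ and the inequalities $|H\cap E|\geq|S\cap E|_*$, $|H\cap E^c|\geq|S\cap E^c|_*$, which add up to $|H|$) upgrades the equality of total measures to the displayed identity for every measurable $E$.

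With the envelope in hand, I take $E=B_r(x)$ to rewrite
\[
f(x)=\limsup_{r\to 0}\dfrac{|H\cap B_r(x)|}{|B_r(x)|}.
\]
Since $H$ is Lebesgue measurable, the classical Lebesgue differentiation theorem applies to $H$ and yields $\lim_{r\to 0}|H\cap B_r(x)|/|B_r(x)|=\mathbf{1}_H(x)$ for $x$ outside a Lebesgue-null set $N$. In particular $f(x)=1$ at every $x\in H\setminus N$. Because $M\subset S\subset H$ and $M\cap(H\setminus N)=\emptyset$, we get $M\subset N$; hence $|M|_*=0$, which also forces $M$ to be Lebesgue measurable with $|M|=0$.

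For the metric-space/Carath\'eodory-outer-measure variant the structure of the argument is unchanged: the envelope $H$ is built from the regularity and $\sigma$-finiteness of $\mu^*$ together with the fact that Borel sets are Carath\'eodory measurable, and the density theorem for the associated Borel measure $\mu$ follows from a Besicovitch- or Vitali-type covering theorem on the ambient space. The main obstacle, and really the only non-routine ingredient, is the envelope step, which is precisely what allows the density theorem to be applied despite $S$ not being measurable; the rest is a direct transcription of the classical argument.
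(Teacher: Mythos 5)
Your proof is correct, but it follows a genuinely different route from the paper's. The paper decomposes $M=\bigcup_k M_k$ where $M_k$ consists of points whose upper density is bounded away from $1$ by a fixed margin $1/k$, assumes $|M_k|_*>0$, produces a finite disjoint Vitali subcover $B_1,\dots,B_N$ of $M_k$ with $\sum|B_i|<(1+\epsilon)|M_k|_*$, and then chains $|M_k|_*-\epsilon<\sum|S\cap B_i|_*\leq(1-1/k)\sum|B_i|<(1+\epsilon)(1-1/k)|M_k|_*$ to reach a contradiction as $\epsilon\to 0$. In other words, the paper works with the outer measure throughout and invokes the Vitali covering theorem directly (citing Wheeden--Zygmund). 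You instead replace $S$ by a measurable hull $H$ satisfying $|S\cap E|_*=|H\cap E|$ for measurable $E$, observe that this identity with $E=B_r(x)$ turns the problem into a statement about the measurable set $H$, and then invoke the classical Lebesgue density theorem. Both are sound: the paper's argument is more self-contained (no envelope construction, a single covering-lemma application) and exposes the quantitative contradiction explicitly; your argument is more modular and highlights that the lemma is really the density theorem plus the standard fact that every set has a measurable hull that reproduces outer measure on measurable slices. One small caveat in your write-up: the Carath\'eodory-splitting upgrade requires $|S|_*<\infty$ as stated, so for unbounded $S$ you should either build the hull annulus-by-annulus (using additivity of $|\cdot|_*$ over a measurable partition), or simply localize --- $|M|=0$ is a local property and the density function of $S$ and $S\cap B_m(0)$ coincide on $B_{m-1}(0)$ for $r<1$ --- before applying the finite-measure version of the envelope. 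The metric-space remark is consistent with the paper's, with the same implicit requirement that a Vitali- or Besicovitch-type covering theorem be available for $\mu^*$.
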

\begin{proof}
Fix $x\in M$. There exists a positive integer $m$ such that $f(x)<1-\dfrac{1}{m}<1$, and let $m_x$ be the smallest integer with this property.
So for each $\eta>0$ sufficiently small we have
\begin{equation}\label{densityinequality}
\sup_{0<r\leq \delta}\dfrac{|S\cap B_r(x)|_*}{|B_r(x)|}<1-\dfrac{1}{m_x},\qquad \text{for all $0<\delta \leq \eta$}.
\end{equation}
Given a positive integer $k$, let $M_k=\{x\in M: m_x=k\}$. We have $M=\cup_{k=1}^\infty M_k$.
We shall prove that $|M_k|=0$ for all $k$.
Suppose by contradiction that $|M_k|_*>0$ for some $k$, we may assume also that $|M_k|_*<\infty$.
Let us consider the family of balls $\mathcal F=\{B_r(x)\}_{x\in M_k}$ with $B_r(x)$ satisfying \eqref{densityinequality}. Then we have that the family $\mathcal F$ covers $M_k$ in the Vitali sense, i.e., for every $x\in M_k$ and for every $\eta>0$ there is ball in $\mathcal F$ containing $x$ whose diameter is less than $\eta$. Therefore from \cite[Corollary (7.18) and equation (7.19)]{wheeden-zygmund:book} we have that given $\epsilon>0$ there exists a family of disjoint balls $B_1,\cdots ,B_N$ in $\mathcal F$ such that
\begin{align*}
|M_k|_* -\epsilon &< \left| M_k\cap \cup_{i=1}^N B_i \right|_*,\qquad \text{and}\\
\sum_{i=1}^N |B_i| &< (1+\epsilon)|M_k|_*.
\end{align*}
Since $M_k\subset S$, then from \eqref{densityinequality} we get
\[
|M_k|_* -\epsilon
< \left| S\cap \cup_{i=1}^N B_i \right|_*
\leq \sum_{i=1}^N |S\cap B_i|_*\leq \left(1-\dfrac{1}{k}\right)\sum_{i=1}^N |B_i|
<(1+\epsilon)\left(1-\dfrac{1}{k}\right)|M_k|_*,
\]
then letting $\epsilon\to 0$ we obtain a contradiction.
\end{proof}


%

\begin{thebibliography}{ACDM97}



\bibitem[AA99]{AA}
G.~Alberti and L.~Ambrosio, \emph{A geometrical approach to monotone functions in $\R^n$}, Mathematische Zeitschrift
  \textbf{230} (1999), 259--316.
  
\bibitem[ACDM97]{ACDM}
L.~Ambrosio, A.~Coscia and G.~Dal Maso, \emph{Fine properties of functions with bounded deformation}, Arch. Rational Mech. Anal.,
  \textbf{139} (1997), 201--238.  

\bibitem[AGS05]{ambrosio-gigli-savare-book}
L.~Ambrosio, N.~Gigli and G.~Savar\'e,
\newblock {\em Gradient Flows in Metric Spaces and in the Space of Probability Measures}.
\newblock Lectures in Mathematics, Birkh\"auser, 2005.   
  



\bibitem[B73]{brezis-book-monotone-maps}
H. Br\'ezis,
\newblock {\em Op\'erateurs Maximaux Monotones et Semi-Groupes de Contractions dans les Espaces de Hilbert}.
\newblock North-Holland Mathematical Studies \#50, 1973.  


  
\bibitem[DePKW]{2024-de-pascale-survey-monotonicity}
L. De Pascale, A. Kausamo, and K. Wyczesany,
\emph{60 years of Cyclic Monotonicity: a survey}, 
\newblock \url{https://arxiv.org/pdf/2308.07682.pdf}.
  
  
  
  
  
\bibitem[Fe69]{federerbook}
H. Federer,
\newblock {\em Geometric Measure Theory}.
\newblock Springer, 1969.  
  
\bibitem[GM96]{gangbo-mccann:acta paper}
W. Gangbo and R. J. McCann,
\newblock The Geometry of Optimal Transportation.
\newblock {\em Acta Math.}, 177:113--161, 1996.

\bibitem[GH09]{gutierrez-huang:farfieldrefractor}
C.~E. Guti\'errez and Qingbo Huang,
\newblock The refractor problem in reshaping light beams.
\newblock {\em Arch. Rational Mech. Anal.}, 193(2):423--443, 2009.

\bibitem[GOdf]{2020-goldman-otto-variational}
Michael Goldman and Felix Otto,
\newblock A variational proof of partial regularity for optimal transportation
  maps.
\newblock {\em Ann. Scient. Ec. Norm. Sup.}, 53:1209--1233, 2020,
  \url{https://arxiv.org/pdf/1704.05339.pdf}.

\bibitem[G16]{Gut:book}
C.~E. Guti{\'e}rrez,
\newblock {\em The Monge-Amp\`ere Equation}.
\newblock Progress in Nonlinear Differential Equations and Applications, Vol. 89, Birkh\"auser, Boston, MA, 2nd Edition, 2016.


\bibitem[G23]{Gutierrez:23}
C.~E. Guti{\'e}rrez,
\newblock Optimal
  transport and applications to Geometric Optics.
\newblock {\em SpringerBriefs on PDEs and Data Science}, Springer, 
  2023.
 
\bibitem[GM22]{Gutierrez-Montanari:Linfty-estimates}
C.~E. Guti{\'e}rrez and A. Montanari,
\newblock $L^\infty$-estimates in optimal
  transport for non quadratic costs.
\newblock {\em Calculus of Variations and Partial Differential Equations},
61, 163 (2022). \url{https://doi.org/10.1007/s00526-022-02245-0}. 
  
  
\bibitem[GvN07]{Gutierrez:2007fk}
C.~E. Guti{\'e}rrez and Truyen van Nguyen,
\newblock On {M}onge--{A}mp{\`e}re type equations arising in optimal
  transportation problems.
\newblock {\em Calculus of Variations and Partial Differential Equations},
  28(3):275--316, 2007.



 \bibitem[McCPW]{McCPW}
R. McCann, B. Pass, and M. Warren, \emph{Rectifiability of optimal transportation plans}, Canad. J. Math. \textbf{64} (2012), 924-934.

\bibitem[Mig76]{Mig}
F.~Mignot, \emph{Contr\^ole dans les In\'equations Variationelles Elliptiques}, J. Funct.
  Anal. \textbf{22} (1976), 130--185.
  
  
  
 
  

  
  
%

  
\bibitem[OPRdf]{2020-otto-prodhomme-ried-general-costs}
Felix Otto, Maxime Prod'Homme, and Tobias Ried,
\emph{Variational approach to regularity of optimal transport maps: general
  cost functions}, Ann. PDE 7, 17 (2021)
\newblock \url{https://arxiv.org/pdf/2007.11549.pdf}.




\bibitem[RW98]{RW:variational analysis}
R. T. Rockafellar and R. J-B. Wets, \emph{Variational Analysis}, Grundlehren der
mathematischen Wissenschaften, Vol. 317, Springer, 1998, Corrected 3rd. printing 2009. \url{https://sites.math.washington.edu/~rtr/papers/rtr169-VarAnalysis-RockWets.pdf}

\bibitem[San15]{santambrogio-book}
Filippo Santambrogio,
\newblock {\em Optimal transport for applied mathematicians}, volume~87 of {\em
  Progress in Nonlinear Differential Equations and Their Applications}.
\newblock Birkh\"auser, Boston, MA, 2015.

%

\bibitem[WZ15]{wheeden-zygmund:book}
R.L.~Wheeden and A. Zygmund, \emph{Measure and Integral}, CRC Press, 2nd Edition, 2015.



\end{thebibliography}

\end{document}